\mathchardef\gt="313E  
\mathchardef\lt="313C  
\mathchardef\colon="303A  
\edef\cdrestoreat{
\noexpand\catcode\lq\noexpand\@=\the\catcode\lq\@}\catcode\lq\@=11
\def\thmitem{\def\theenumi{\@roman\c@enumi}
\def\labelenumi{{\normalfont(\theenumi)}}}
\newcounter{P@sc}\newcounter{P@scp}\newcounter{P@t}\newlength{\P@x}
\newlength{\P@xa}\newlength{\P@xb}\newlength{\P@y}\newlength{\P@ya}
\newlength{\P@yb}\newsavebox{\P@pt}
\def\bezier#1(#2,#3)(#4,#5)(#6,#7){\c@P@sc#1\relax
 \c@P@scp\c@P@sc \advance\c@P@scp\@ne
 \P@xb #4\unitlength \advance\P@xb -#2\unitlength \multiply\P@xb \tw@
 \P@xa #6\unitlength \advance\P@xa -#2\unitlength
 \advance\P@xa -\P@xb \divide\P@xa\c@P@sc
 \P@yb #5\unitlength \advance\P@yb -#3\unitlength \multiply\P@yb \tw@
 \P@ya #7\unitlength \advance\P@ya -#3\unitlength
 \advance\P@ya -\P@yb \divide\P@ya\c@P@sc
 \setbox\P@pt\hbox{\vrule height\@halfwidth depth\@halfwidth 
 width\@wholewidth}\c@P@t\z@ 
 \put(#2,#3){\@whilenum{\c@P@t<\c@P@scp}\do
 {\P@x\c@P@t\P@xa \advance\P@x\P@xb \divide\P@x\c@P@sc \multiply\P@x\c@P@t 
 \P@y\c@P@t\P@ya \advance\P@y\P@yb \divide\P@y\c@P@sc \multiply\P@y\c@P@t 
 \raise \P@y \hbox to \z@{\hskip \P@x\unhcopy\P@pt\hss}\advance\c@P@t\@ne}}}
\def\sig{\mbox{\vbox to 0pt{\vss\hbox to 0pt{\setlength{\unitlength}{.35cm}
\begin{picture}(5,5)
      \bezier140(1,3)(1,0)(1.5,0) 
      \bezier120(1.5,0)(1.5,1.5)(.5,2.7) 
      \bezier180(.5,2.7)(0,4)(4,3.5) 
      \bezier140(4,3.5)(4.5,3.2)(1.5,2.7) 
      \multiput(2,.5)(.4,.05)3{
      \bezier30(0,0)(.2,0)(.2,.3) 
      \bezier30(.2,.3)(.2,.05)(.4,.05) 
      }
      \bezier30(3.2,.65)(3.27,.67)(3.3,.8) 
      \bezier30(3.3,.8)(3.34,.6)(3.53,.66) 
      \bezier30(3.53,.66)(3.6,.73)(3.58,.8) 
      \bezier30(3.58,.8)(3.555,.99)(3.4,.96) 
      \bezier10(2.3,1)(2.3,1.01)(2.31,1.02) 
\end{picture}\hss}}}}
\let\ttori\tt
\def\tt{\@ifnextchar_{\top\kern-.5ex}{\top}}
\newtheoremstyle{teorema}{\topsep}{\topsep}
{\thmitem\slshape}{}{\bf}{{\normalfont.}}{.5em}{}
\newtheoremstyle{definizione}{\topsep}{\topsep}
{\thmitem\normalfont}{}{\bf}{{\normalfont.}}{.5em}{}
\def\namefont#1{{\sc #1}}
\theoremstyle{teorema}
\newtheorem{theorem}{\namefont{Theorem}}[section]
\newtheorem{prop}[theorem]{\namefont{Proposition}}
\newtheorem{cor}[theorem]{\namefont{Corollary}}
\theoremstyle{definizione}
\newtheorem{definition}[theorem]{\namefont{Definition}}
\newtheorem{remark}[theorem]{\namefont{Remark}}
\newtheorem{exm}[theorem]{\namefont{Example}}
\newtheorem{exms}[theorem]{\namefont{Examples}}
\def\dfn#1{{\bfseries\itshape #1\/}}
\def\cat#1{\ensuremath{\mathcal{#1}}}
\def\Cat#1{\ensuremath{{\normalfont\textsf{\bfseries #1}}}}
\def\id#1{\ensuremath{\mathrm{id}_{#1}}}
\def\op{{}^{\textrm{\scriptsize op}}}
\def\Op{^{\textrm{\tiny op}}}
\def\sop{\strut^{\textrm{\scriptsize op}}}
\def\op{\mathchoice{\sop}{\sop}{\Op}{\Op}}
\def\exr{_{\textrm{\scriptsize ex/reg}}}
\def\Gr(#1){\ensuremath{\mathcal{G}_{#1}}}
\def\des#1{\ensuremath{\mathcal{D}\kern-.3ex\textit{es\kern.2ex}_{#1}}}
\let\Land\wedge
\let\ForalL\forall \def\Forall#1.{\ForalL_{#1}}
\let\ExistS\exists \def\Exists#1.{\ExistS_{#1}}
\def\LGE{\mathrel{\bgroup\ooalign{\hfil\raise.8ex\hbox{$\lt$}\hfil
 \crcr\hfil\raise-.5ex\hbox{$\eqslantgtr$}}\egroup}}
\def\Lge{\mathrel{\bgroup
 \ooalign{\hfil\raise.4ex\hbox{$\scriptscriptstyle\lt$}\hfil
 \crcr\hfil\raise-.3ex\hbox{$\scriptscriptstyle\eqslantgtr$}}\egroup}}
\def\gel{=}
\def\pr{\mathrm{pr}}
\def\cmp#1{\ensuremath{\{\kern-2.5pt|{#1}|\kern-2.5pt\}}}
\def\EH{\Cat{ED}\xspace}
\def\FS{\Cat{LFS}\xspace}
\def\RE{\Cat{Reg}\xspace}
\def\XC{\Cat{Xct}\xspace}
\def\CM{\Cat{CEED}\xspace}
\def\EE{\Cat{EED}\xspace}
\def\Qex#1#2{\relax\ifx#2q\ensuremath{\left[{#1}\right]}\fi
\ifx#2c\ensuremath{{\left|{#1}\right|}}\fi
\ifx#2x\ensuremath{\widehat{#1}}\fi}
\def\QEx#1#2{\ensuremath{{({#1})_{_{\textrm{\scriptsize #2}}}}}}
\def\Q#1{\Qex{#1}{q}}
\def\X#1{\QEX{#1}{x}}
\def\P#1{\Qex{#1}{c}}
\def\Q#1{\QEx{#1}{q}}
\def\Q#1{\ensuremath{\widehat{#1}}}
\def\X#1{\QEx{#1}{x}}
\def\P#1{\QEx{#1}{c}}
\def\D{{\rotatebox[origin=c]{180}{\ensuremath{E}}\kern-.3ex}}
\def\B{{\rotatebox[origin=c]{180}{\ensuremath{A}}\kern-.6ex}}
\def\QD{\Q{{\rotatebox[origin=c]{180}{\ensuremath{E}}}}\kern-.3ex}
\def\EC#1{\ensuremath
 {\left\lfloor\left.\kern-.2ex{#1}\kern-.2ex\right\rceil\right.}}
\def\Strut{\hbox{\vrule height.75em depth.35em width0pt}}
\def\ec#1{\ensuremath
 {\left\lfloor\left.\kern-.3ex\Strut{#1}\Strut\kern-.3ex\right\rceil\right.}}
\def\ec#1{\ensuremath{\left[{#1}\right]}}
\def\dpd{\mathchoice{\textstyle\prod}{\prod}{\prod}{\prod}\kern-.2ex\strut}
\def\F{F^{ML}}
\def\mtt{\ensuremath{\mathbf{mtt}}\xspace}
\def\CTT{\textrm{CoC}\xspace}
\def\prp{\kern.5ex\mbox{\normalfont\it prop}\kern.5ex}
\def\prps{\kern.5ex\mbox{\normalfont\it prop}_s\kern.5ex}
\def\blank{\mbox{--}}
\def\ie{{\textit{i.e.}}\xspace}
\def\eg{{\textit{e.g.}}\xspace}
\def\loccit{{\textit{loc.cit.}}\xspace}
\long\def\beginskip#1\endskip{\newpage}
\def\TP#1{\ensuremath{\cat{T}_{#1}}}
\def\ER#1{\ensuremath{\cat{E}_{#1}}}
\def\ple#1{\ensuremath{(#1)}}
\def\fp#1{\ensuremath{P_{#1}}}
\def\rla#1{\ar[#1]|{\smash{{}_{\scriptscriptstyle|}}}}
\def\Sub{\mathrm{Sub}}
\def\S#1{\ensuremath{\Sub_{\cat{#1}}}}
\def\twoup#1#2{\mathbin{\begin{array}[b]{@{}l@{}}
\kern.2ex\scriptstyle#1\\[-2.1ex]#2\end{array}}}
\def\tr#1{\ensuremath{{#1}^t}}
\date{}
\begin{document}

\title{Unifying exact completions}
\author{Maria Emilia Maietti\thanks{%
Dipartimento di Matematica Pura ed Applicata,
Universit\`a degli Studi di Padova,
via Trieste 63, 35121 Padova, Italy,
email:~\texttt{maietti@math.unipd.it}}
\and
Giuseppe Rosolini\thanks{%
Dipartimento di Matematica, Universit\`a degli Studi di Genova,
via Dodecaneso 35, 16146 Genova, Italy,
email:~\texttt{rosolini@unige.it}}}
\maketitle

\begin{abstract}
We define the notion of exact completion with respect to an
existential elementary doctrine. We observe that 
the forgetful functor from the 2-category exact categories to
existential elementary doctrines has a left biadjoint that can be
obtained as a composite of two others. Finally, we conclude 
how this notion encompasses both that of the exact completion of a
regular category as well as that of the exact completion of a
cartesian category with weak pullbacks.
\end{abstract}

{\bf MSC 2000}: 03G30 03B15 18C50 03B20 03F55

{\bf Keywords:}
exact category, elementary existential doctrine, free construction,
tripos

\section{Introduction}
The notion of completion by quotients, and in particular that of exact
completion, has been widely studied in category theory, see for
example \cite{JacobsB:catltt,CarboniA:freecl,CarboniA:regec}. The
concept of quotient completion is pervasive  not only
in mathematics but also in computer science, in particular
for what concerns the way proofs are formalized in a computer-assisted
way in an intensional set theory that does not carry quotient sets as
primitive notion.

In \cite{MaiettiME:quofcm} the authors began to study a
categorical structure involved with quotient completions, 
relativizing the basic concept to a doctrine equipped with a logical
structure sufficient to describe the notion of an equivalence relation.
The notion of quotient within an elementary doctrine
and that of elementary quotient completion producing a quotient completion
that is not generally exact but encompasses relevant examples used
in type theory were introduced in \cite{MaiettiME:eleqc}.

In the present paper, that analysis of quotient completion
is pushed further viewing the exact completion of a regular category
or the exact completions of a cartesian category with weak pullbacks
as instances of a more general ``exact completion''
with respect to an elementary existential doctrine.
 
Indeed, for an exact category \cat{X}, the indexed inf-semilattice
$\S{X}:\cat{X}\op\longrightarrow\Cat{InfSL}$ 
of subobjects, which assigns to an
object $A$ in \cat{X} the poset $S(A)$ of subobjects of $A$ in
\cat{X}, 
constitutes the archetypal example of a fibrations of sets and
functions as all known frameworks for modelling a constructive
theory of sets produce exact categories, \eg toposes as models of IZF
or arising from a tripos, 
categories of classes for CZF, total setoids \`a la Bishop on
Martin-{\"o}f's type theory~\cite{PMTT}. Since, within a set theory,
functions are defined from the logic, it is of
little surprise that the models are obtained from 
indexed inf-semilattices which are existential elementary doctrines.

We show that many of the models are obtained as a free
construction. Indeed, 
the forgetful functor from the 2-category of exact categories to that
of existential elementary doctrines has a left biadjoint that can be
obtained as a composite of two others: the first adds (full)
comprehensions to an existential elementary doctrine, the other
turns an existential elementary doctrine with full comprehension into
(the fibration of subobjects of) an exact category, universally so.
In particular, when the second is applied to the doctrine of
subobjects of a regular category, it gives rise to its exact
completion, see \cite{FreydP:cata}.

For an existential elementary doctrine $P$, the elementary quotient
completion of $P$ presented in \cite{MaiettiME:eleqc} appears as a
subcategory of the exact completion of $P$ by the universal properties
of the various constructions involved. There are interesting cases
when that inclusion is an equivalence; for instance,
when $P$ is the poset indexed doctrine $\Psi_{\cal C}$ of weak
subobjects of a cartesian category $\cal C$ with weak pullbacks. Thus
also the exact completion on a cartesian category with weak pullbacks
is an instance of the exact 
completion of an elementary existential doctrine
as the elementary quotient completion of  $\Psi_{\cal C}$ coincides
with the exact completion of $\cal C$ as a weakly lex category, see
\loccit.

\section{Elementary existential doctrines}
A doctrine subsumes the basic categorical concept of a logic.
The notion was introduced, in a series of seminal papers, by
F.W.~Lawvere to synthetize the structural properties of logical
systems, see 
\cite{LawvereF:adjif,LawvereF:diaacc,LawvereF:equhcs}, see also
\cite{LawvereF:setfm,JacobsB:catltt} for a unified survey. Lawvere's
crucial intuition was to consider logical languages and theories as
fibrations to study their 2-categorical properties, \eg connectives
and quantifiers are determined by structural adjunctions.

Recall that
an \dfn{elementary doctrine} is an indexed inf-semilattice 
$P:\cat{C}\op\longrightarrow\Cat{InfSL}$
from (the opposite of) a category \cat{C} with binary products
to the category of inf-semilattices and homomorphisms
such that, for every object $A$ in \cat{C}, there is an
object $\delta_A$ in $P(A\times A)$ and\thmitem
\begin{enumerate}
\item the assignment
$$\D_{<\id{A},\id{A}>}(\alpha)\colon=
P_{\pr_1}(\alpha)\Land\delta_A$$
for $\alpha$ in $P(A)$ determines a left adjoint to 
$P_{<\id{A},\id{A}>}:P(A\times A)\to P(A)$---the action of a doctrine
$P$ on an arrow is written as $P_f$
\item for every map 
$e\colon=<\pr_1,\pr_2,\pr_2>:X\times A\to X\times A\times A$ in \cat{C},
the assignment
$$\D_{e}(\alpha)\colon=
P_{<\pr_1,\pr_2>}(\alpha)\Land_{A\times A}P_{<\pr_2,\pr_3>}(\delta_A)$$
for $\alpha$ in $P(X\times A)$ determines a left adjoint to 
$P_{e}:P(X\times A\times A)\to P(X\times A)$.
\end{enumerate}
Also recall from \loccit that 
an \dfn{existential doctrine} is an indexed inf-semilattice 
$P:\cat{C}\op\longrightarrow\Cat{InfSL}$
such that, for $A_1$ and $A_2$ in \cat{C} and projections
$\pr:A_1\times A_2\to A_i$, $i=1,2$,
the functors $P_{\pr_i}:P(A_i)\to P(A_1\times A_2)$ have a left adjoint
$\D_{\pr_i}$ which satisfy
\begin{description}
\item[\dfn{Beck-Chevalley condition}:] for any pullback diagram
$$\xymatrix{X'\ar[r]^{\pr'}\ar[d]_{f'}&A'\ar[d]^f\\X\ar[r]^{\pr}&A}$$
with $\pr$ a projection (hence also $\pr'$ a projection), for any
$\beta$ in $P(X)$, the canonical arrow 
$\D_{\pr'}P_{f'}(\beta)\leq P_f\D_\pr(\beta)$ in $P(A')$ is iso; 
\item[\dfn{Frobenius reciprocity}:] for $\pr:X\to A$ a projection,
$\alpha$ in $P(A)$, $\beta$ in $P(X)$, the canonical arrow
$\D_\pr(P_\pr(\alpha)\Land_A\beta)\leq\alpha\Land_X\D_\pr(\beta)$ in $P(A)$ is
iso.
\end{description}

\begin{remark}\label{rembic}
Note for an elementary doctrine
$P:\cat{C}\op\longrightarrow\Cat{InfSL}$ that, in case \cat{C}
has a terminal object, conditions (ii) entails condition (i).

Also, given $\alpha_1$ in $P(X_1\times Y_1)$ and $\alpha_2$ in 
$P(X_2\times Y_2)$, if one writes
$\alpha_1\boxtimes\alpha_2$ for the object  
$$P_{<\pr_1,\pr_3>}(\alpha_1)\Land P_{<\pr_2,\pr_4>}(\alpha_2)$$
in 
$P(X_1\times X_2\times Y_1\times Y_2)$ 
where $\pr_i, i=1,2,3,4$, are the projections from 
$X_1\times X_2\times Y_1\times Y_2$ to each of the four factors,
then
condition (ii) is to require that
$\delta_{A\times B}=\delta_A\boxtimes\delta_B$ for every pair of
objects $A$ and $B$ in \cat{C}.
\end{remark}

Beyond the standard example of the elementary existential doctrine of
subobjects of a regular category \cat{X}, one can consider
examples directly from logic such as the indexed
Lindenbaum-Tarski algebras $LT:\cat{V}\op\longrightarrow\Cat{InfSL}$
of well-formed formulae of a theory
$\mathscr{T}$ with equality in a first order language
$\mathscr{L}$ where
the domain category \cat{V} has lists of variables as objects and
term substitutions as arrows, with composition given by simultaneous
substitution; 
the functor $LT:\cat{V}\op\longrightarrow\Cat{InfSL}$ takes a list of
variables to the Lindenbaum-Tarski algebra of
equivalence classes of well-formed formulae of
$\mathscr{L}$ whose free variables are within $x_1$,\ldots,$x_n$.

An important example for theories developed for formalizing
constructive mathematics is the following: 
Consider a cartesian category \cat{S} with 
weak pullbacks and the functor of {\it weak subobjects}
$\Psi:\cat{S}\op\longrightarrow\Cat{InfSL}$
which evaluates, at an object $A$ of \cat{S}, as the poset reflection
of each comma category $\cat{S}/A$. The left adjoints are
computed by post-composition. We refer the reader to
\cite{MaiettiME:quofcm,MaiettiME:eleqc} for further details.

We consider the 2-category \EH has elementary doctrines as objects,
1-arrows are pairs $(F,b)$ 
$$
\xymatrix@C=4em@R=1em{
{\cat{C}\op}\ar[rd]^(.4){P}_(.4){}="P"\ar[dd]_{F\op}&\\
           & {\Cat{InfSL}}\\
{\cat{D}\op}\ar[ru]_(.4){R}^(.4){}="R"&\ar"P";"R"_b^{\kern-.4ex\cdot}}
$$
where the functor $F$ preserves products and, for every object $A$ in
\cat{C}, the functor $b_A:P(A)\to R(F(A))$ preserves all the
structure. More explicitly, $b_A$ preserves finite meets and, for
every object $A$ in \cat{C}, 
$b_{A\times A}(\delta_A)\gel R_{<F(\pr_1),F(\pr_2)>}(\delta_{F(A)})$
and the 2-arrows are natural transformations
$\theta:F\twoup{\kern.5ex\cdot}{\to}G$ such that
$$
\xymatrix@C=6em@R=1em{
{\cat{C}\op}\ar[rd]^(.4){P}_(.4){}="P"
\ar@<-1ex>@/_/[dd]_(.35){F\op}="F"\ar@<1ex>@/^/[dd]^(.35){G\op}="G"&\\
           & {\Cat{InfSL}}\\
{\cat{D}\op}\ar[ru]_(.4){R}^(.4){}="R"&
\ar@/_/"P";"R"_{b\kern.5ex\cdot\kern-.5ex}="b"
\ar@<1ex>@/^/"P";"R"^{\kern-.5ex\cdot\kern.5ex c}="c"
\ar"G";"F"_{.}^{\theta\op}\ar@{}"b";"c"|{\leq}}
$$
so that, for every object $A$ in \cat{C} and every $\alpha$ in $P(A)$,
one has $b_A(\alpha)\leq R_{\theta_A}(c_A(\alpha))$.

The 2-category \EE is the 1-full subcategory of \EH on elementary
existential doctrines where 2-arrows have each component $b_A$
preserving the existential adjoints.

As mentioned in the Introduction, since the indexed inf-semilattice 
$\S{X}:\cat{X}\op\longrightarrow\Cat{InfSL}$
of subobjects for an exact category \cat{X} is elementary 
existential, that construction induces an obvious forgetful functor from
the 2-category \XC of exact categories and regular functors to \EE.

In \cite{MaiettiME:eleqc} the authors presented a construction to
add quotients to an elementary doctrine freely. A similar construction
is that used to produce a topos from a tripos, see
\cite{HylandJ:trit,PittsA:triir,OostenJ:reaait}, and it produces a
left biadjoint to the forgetful functor from \XC to \EE.

\begin{definition}\label{tris}
Given an elementary existential doctrine
$P:\cat{C}\op\longrightarrow\Cat{InfSL}$, consider the category \TP{P}, called
{\it exact completion of the e.e.d. $P$},
whose
\begin{description}
\item[objects] are pairs $\ple{A,\rho}$ such that
$\rho$ is in $P(A\times A)$ and satisfies
$$\begin{array}{lp{25em}}
\rho\leq\fp{<p_2,p_1>}(\rho)&
in $P(A\times A)$ with $p_1,p_2:A\times A\to A$ projections\\[1ex]
\fp{<p_1,p_2>}(\rho)\Land\fp{<p_2,p_3>}(\rho)
\leq\fp{<p_1,p_3>}(\rho)&
in $P(A\times A\times A)$ with $p_1,p_2,p_3:A\times A\to A$
projections\\
\end{array}$$
\item[an arrow
$\phi:\ple{A,\rho}\to\ple{B,\sigma}$] is an object $\phi$ in
$P(A\times B)$ such that 
\begin{enumerate}
\item $\phi\leq\fp{<p_1,p_1>}(\rho)\Land\fp{<p_2,p_2>}(\sigma)$
\item $\fp{<p_1,p_2>}(\rho)\wedge \fp{<p_2,p_3>}(\phi)\leq
\fp{<p_1,p_3>}(\phi)$ in $P(A\times A\times B)$ where the $p_i$'s are
appropriate projections
\item $\fp{<p_1,p_2>}(\phi)\wedge\fp{<p_2,p_3>}(\sigma)\leq
\fp{<p_1,p_3>}(\phi)$ in $P(A\times B\times B)$ where, again, the
$p_i$'s are appropriate projections
\item $\fp{<p_1,p_2>}(\phi)\wedge\fp{<p_1,p_3>}(\phi)\leq
\fp{<p_2,p_3>}(\sigma)$ in $P(A\times B\times B)$ where the $p_i$'s are
as before
\item $\fp{<p_1,p_1>}(\rho)\leq\D_{p_2}(\phi)$ 
in $P(A)$ where $p_1:A\times B\to A$ and $p_2:A\times B\to B$ are the
projections
\end{enumerate}
\end{description}
where composition
$\xymatrix@1@=4ex{{\ple{A,\rho}}\ar[r]^{\phi}&{\ple{B,\sigma}}
\ar[r]^{\psi}&{\ple{C,\tau}}}$ is defined as
$$
\D_{p_2}(\fp{<p_1,p_2>}(\phi)\Land\fp{<p_2,p_3>}(\psi))
$$
and identity is
$\xymatrix@1@=4ex{{\ple{A,\rho}}\ar[r]^{\rho}&{\ple{A,\rho}}}$
\end{definition}

\begin{exms}
The main examples of this construction are toposes obtained from a
tripos, see \cite{HylandJ:trit,PittsA:triir,OostenJ:reaait}.
\end{exms}

\begin{remark}
It is quite apparent that the elementary structure plays no role in
the definitions in \ref{tris}---but it will be crucial for
\ref{mainthm}. We refer the reader to \cite{PasqualiF:cofced} for an
analysis of that.
\end{remark}

\begin{remark}
The logical relevance of \ref{tris} is exposed if
one considers the allegory $\cat{A}_P$ of relations of
an elementary existential doctrine
$P:\cat{C}^{\mathrm{op}}\longrightarrow\Cat{InfSL}$, see \cite{FreydP:cata},
whose objects are those of \cat{C} and the poset of 1-arrows from
$A$ to $B$ is $P(A\times B)$.
Composition of 1-arrows 
$\xymatrix@1@=4ex{A\ar[r]^{\theta}\rla{r}
&B\ar[r]^{\zeta}\rla{r}
&C}$ is
$$\D_{p_2}(\fp{<p_1,p_2>}(\theta)\Land\fp{<p_2,p_3>}(\zeta))$$
with identities given by $\delta_A$.
The opposite $\theta^\circ$ of a 1-arrow
$\xymatrix@1@=4ex{A\ar[r]^{\theta}\rla{r}&B}$
is given by $\xymatrix@1@=4ex{B\ar[rr]^{\fp{<p_2,p_1>}(\theta)}\rla{rr}&&A}$.

If one then takes maps in the splitting (allegory) of the ``symmetric
idempotents'' of  $\cat{A}_P$, one gets exactly the category \TP{P},
see \cite{CarboniA:catarp}.

The locally posetal category $\cat{A}_P$ is also a cartesian
bicategory, see \cite{CarboniA:carbi}.
The product functor of the base extends to a symmetric tensor
$\boxtimes$ as in \ref{rembic}. The
structure of commutative comonoid on each object $A$ is given by
$$\xymatrix@=2.5em{
1&A\ar[l]_{\tt}\rla{l}
\ar[rrr]^(.45){\fp{<p_1,p_2>}(\delta)\Land\fp{<p_1,p_3>}(\delta)}
\rla{rrr}&&&A\times A
}$$

Note that the computation of the opposite $\theta^\circ$ of a 1-arrow 
$\xymatrix@1@=4ex{A\ar[r]^{\theta}\rla{r}&B}$
in the cartesian bicategory gives precisely the 1-arrow
$\xymatrix@1@=4ex{B\ar[r]^{\fp{<p_2,p_1>}(\theta)}\rla{r}&A}$, see
\cite{CarboniA:carbi}.

The conditions (ii)-(v) in \ref{tris} are written in the notation of
the bicategory respectively as
$$\xymatrix@R=.1ex@C=3em{&A\ar@/^/[rd]^{\phi}|{\smash{{}_{\scriptscriptstyle|}}}\\
A\ar@{{}{}{}}[rr]|{\rotatebox[origin=c]{270}{$\leq$}}
\ar@/^/[ru]^{\rho}|{\smash{{}_{\scriptscriptstyle|}}}
\ar@/_1em/[rr]_{\phi}|{\smash{{}_{\scriptscriptstyle|}}}
&&B\\}\quad
\xymatrix@R=.1ex@C=3em{&B\ar@/^/[rd]^{\sigma}|{\smash{{}_{\scriptscriptstyle|}}}\\
A\ar@{{}{}{}}[rr]|{\rotatebox[origin=c]{270}{$\leq$}}
\ar@/^/[ru]^{\phi}|{\smash{{}_{\scriptscriptstyle|}}}
\ar@/_1em/[rr]_{\phi}|{\smash{{}_{\scriptscriptstyle|}}}
&&B}\quad
\xymatrix@R=.1ex@C=3em{&A\ar@/^/[rd]^{\phi}|{\smash{{}_{\scriptscriptstyle|}}}\\
B\ar@{{}{}{}}[rr]|{\rotatebox[origin=c]{270}{$\leq$}}
\ar@/^/[ru]^{\phi^\circ}|{\smash{{}_{\scriptscriptstyle|}}}
\ar@/_1em/[rr]_{\sigma}|{\smash{{}_{\scriptscriptstyle|}}}
&&B}\quad
\xymatrix@R=.1ex@C=3em{{\qquad}\\
A\ar@{{}{}{}}[rr]|{\rotatebox[origin=c]{270}{$\leq$}}
\ar@/_/[rd]_{\phi}|{\smash{{}_{\scriptscriptstyle|}}}
\ar@/^1em/[rr]^{\rho}|{\smash{{}_{\scriptscriptstyle|}}}
&&A\\
&B\ar@/_/[ru]_{\phi^\circ}|{\smash{{}_{\scriptscriptstyle|}}}}
$$
\end{remark}

We shall find it easy to obtain the construction of \TP{P} as the
composite of two left biadjoints to forgetful functors:
\begin{enumerate}
\item the left biadjoint to the inclusion of the 1-full
2-subcategory \CM of \EE on those elementary existential doctrines
with full comprehensions;
\item the left biadjoint to the forgetful functor from \XC to \CM
which takes an exact category \cat{X} to the doctrine
$\S{X}:\cat{X}\op\longrightarrow\Cat{InfSL}$ of subobjects of \cat{X}.
\end{enumerate}

\section{The left biadjoints}

Recall that, for a doctrine
$P:\cat{C}\op\longrightarrow\Cat{InfSL}$ and for an
object $\alpha$ in some $P(A)$, a \dfn{comprehensions} of $\alpha$ is
a map $\cmp\alpha:X\to A$ in \cat{C} such 
that $P_{\cmp\alpha}(\alpha)=\tt_X$ and, for every $f:Z\to A$ such
that $P_f(\alpha)=\tt_Z$ there is a unique map $g:Z\to X$ such that
$f=\cmp\alpha\circ g$. One says that $P$ \dfn{has comprehensions} if
every $\alpha$ has a comprehension, and that $P$ 
\dfn{has full comprehensions} if, moreover, $\alpha\leq\beta$ in
$P(A)$ whenever $\cmp\alpha$ factors through $\cmp\beta$.

As we may need also the weakened form of comprehension, recall that a
\dfn{weak comprehension} of $\alpha$ is a map $c:W\to A$ in \cat{C}
such  that $P_{c}(\alpha)=\tt_W$ and, for every $f:Z\to A$ such
that $P_f(\alpha)=\tt_Z$ there is a (not necessarily unique) map
$g:Z\to X$ such that $f=c\circ g$.

Recall from \cite{JacobsB:catltt}
that the fibration of vertical maps on the category of points
freely adds comprehensions to a given fibration producing an indexed
poset in case the given fibration is such.
For a doctrine 
$P:\cat{C}\op\longrightarrow\Cat{InfSL}$, the indexed poset 
consists of the base category of points \Gr(P) where
\begin{description}
\item[an object] is a pair $(A,\alpha)$ where
$A$ is in \cat{C} and $\alpha$ is in $P(A)$
\item[an arrow {$f:(A,\alpha)\to(B,\beta)$}] is an arrow $f:A\to B$ in
\cat{C} such that $\alpha\leq P_f(\beta)$.
\end{description}
Since the fibres of $P$ are inf-semilattices, 
the category \Gr(P) has products and there is a natural embedding 
$I:\cat{C}\to\Gr(P)$ which maps $A$ to $(A,\top_A)$. The indexed
functor extends to
$\P{P}:\Gr(P)\op\longrightarrow\Cat{InfSL}$
along $I$ by setting 
$\P{P}(A,\alpha)\colon=\{\gamma\in P(A)\mid
\gamma\leq\alpha\}$. Moreover, the comprehensions in \P{P} are full. 

\begin{theorem}\label{cthn}
There is a left bi-adjoint to the inclusion of \CM into \EE.
\end{theorem}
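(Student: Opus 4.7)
The plan is to prove that the assignment $P\mapsto\P{P}$ described just before the statement, together with the unit given by the embedding $I\colon\cat{C}\to\Gr(P)$ and the identity natural transformation (using that $\P{P}(A,\tt_A)=P(A)$), realises the left biadjoint. So the proof splits into three blocks: (a) verify that $\P{P}$ lies in \CM, (b) construct, for every 1-arrow $(F,b)\colon P\to R$ in \EE with $R$ in \CM, an essentially unique 1-arrow $(\bar F,\bar b)\colon\P{P}\to R$ in \CM whose precomposition with the unit gives back $(F,b)$, and (c) extend to 2-cells.

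For (a), I would first check that $\Gr(P)$ has binary products with $(A,\alpha)\times(B,\beta)=(A\times B,P_{\pr_1}(\alpha)\Land P_{\pr_2}(\beta))$, so that $I$ preserves them. The elementary existential structure of $\P{P}$ is then defined by restriction of that of $P$: $\delta_{(A,\alpha)}$ is $\delta_A$ truncated in the down-set $\{\gamma\le\alpha\boxtimes\alpha\}$ (note $\delta_A\le P_{\pr_1}(\alpha)\Land P_{\pr_2}(\alpha)$ using reflexivity of $\delta_A$ via condition (i) of elementary doctrines), and the existential adjoint along a projection $(A\times B,\alpha\boxtimes\beta)\to(B,\beta)$ is obtained by applying $\D_{\pr}$ in $P$ and observing that the result is bounded by $\beta$ thanks to Frobenius. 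Beck--Chevalley and Frobenius in $\P{P}$ come directly from those in $P$. The fullness of comprehensions in $\P{P}$ is already noted in the text.

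For (b), given $(F,b)\colon P\to R$ in \EE with $R$ having full comprehensions, I define $\bar F(A,\alpha)$ to be the domain of the comprehension $\cmp{b_A(\alpha)}\colon\bar F(A,\alpha)\to F(A)$ in $\cat D$, and, for an arrow $f\colon(A,\alpha)\to(B,\beta)$ of $\Gr(P)$, the unique factorisation of $F(f)\circ\cmp{b_A(\alpha)}$ through $\cmp{b_B(\beta)}$ that exists because $\alpha\le P_f(\beta)$ yields $R_{F(f)\circ\cmp{b_A(\alpha)}}(b_B(\beta))=\tt$. The transformation $\bar b_{(A,\alpha)}(\gamma):=R_{\cmp{b_A(\alpha)}}(b_A(\gamma))$ is well defined on $\P{P}(A,\alpha)$ and preserves all the structure because $b$ does. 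Commutativity with the unit is literally the identity on fibres over $I(A)$.

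The real work, and the expected main obstacle, is verifying that $\bar F$ preserves finite products and that $(\bar F,\bar b)$ preserves the elementary existential structure; in particular, one must show that the comprehension of a conjunction $R_{F(\pr_1)}(b_A(\alpha))\Land R_{F(\pr_2)}(b_B(\beta))$ is, up to canonical iso in $\cat D$, the product $\bar F(A,\alpha)\times\bar F(B,\beta)$, and that the comprehension of $b_{A\times A}(\delta_A)$ corresponds to the diagonal of $\bar F(A,\tt_A)$. Both facts are consequences of full comprehension in $R$ (which makes $\cmp{\blank}$ reflect the fibrewise order and, via Beck--Chevalley, turns meets into pullbacks) together with $b$ preserving $\delta$ and the existential adjoints. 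Uniqueness of $(\bar F,\bar b)$ up to canonical iso follows because any extension must send $(A,\alpha)$ to a comprehension of $b_A(\alpha)$, and full comprehensions make such comprehensions unique up to unique iso. The 2-dimensional part is routine: a 2-cell $\theta\colon(F,b)\Rightarrow(G,c)$ in \EE induces $\bar\theta_{(A,\alpha)}$ as the unique factorisation of $\theta_A\circ\cmp{b_A(\alpha)}$ through $\cmp{c_A(\alpha)}$ using the inequality $b_A(\alpha)\le R_{\theta_A}(c_A(\alpha))$, and naturality plus the inequality required on the transformed $\bar b$, $\bar c$ reduce once more to full comprehension of $R$.
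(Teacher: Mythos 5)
Your proposal is correct and follows essentially the same route as the paper: the paper's one-line proof delegates the universal property of the comprehension completion $\P{P}$ along the unit $I:\cat{C}\to\Gr(P)$ to the cited result of Jacobs and only verifies your block (a) — that the existential (and elementary) structure lifts to $\P{P}$ and is preserved by $(I,\id{P})$ — while your blocks (b) and (c) spell out that cited universal property in the standard way. One small slip: the inequality $\delta_A\leq P_{\pr_1}(\alpha)\Land P_{\pr_2}(\alpha)$ is false unless $\alpha=\tt_A$; the correct justification that the truncation $\delta_A\Land(\alpha\boxtimes\alpha)$ is the fibred equality on $(A,\alpha)$ is that it coincides with $\D_{<\id{A},\id{A}>}(\alpha)=P_{\pr_1}(\alpha)\Land\delta_A$, using $\delta_A\Land P_{\pr_1}(\alpha)\leq P_{\pr_2}(\alpha)$.
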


\begin{proof}
It is enough to check that, when
$P:\cat{C}\op\longrightarrow\Cat{InfSL}$ is existential,
the doctrine $\P{P}:\Gr(P)\op\longrightarrow\Cat{InfSL}$ is
existential and the pair $(I,\id{P}):P\to\P{P}$ preserves them.
\end{proof}

For the next step it is useful to recall three results about
fibrations with full comprehensions, regular and exact categories:

The first is in 
\cite{HughesJ:facsft}: in the notation introduced above, it states
that there is a biequivalence between \CM and the 2-category \FS of
categories with finite limits and a proper stable factorization system
(with left exact functors preserving the factorization). 

The second is
in \cite{KellyG:notrrf} and shows that the inclusion of the 2-category
\RE of regular categories (with regular functors) into \FS has a left
adjoint: the left biadjoint to the inclusion is computed on a category
\cat{B} with stable proper factorization system
$(\mathcal{E},\mathcal{M})$ as the
category of maps for the cartesian bicategory of
$\mathcal{M}$-relations in \cat{B}. 

The third is the result from
\cite{FreydP:cata} that the inclusion into \RE of the full
2-subcategory \XC on exact categories has a left biadjoint, which we
shall denote as $(\blank)\exr:\RE\longrightarrow\XC$.

The computation of the composite of the three left biadjoint produces
a 2-functor $\CM\longrightarrow\XC$ which, given an elementary
existential 
doctrine $P:\cat{C}\op\longrightarrow\Cat{InfSL}$, produces the full
subcategory \ER{P} of \TP{P} on those objects $\ple{A,\rho}$ such that
$$\tt_A\leq\fp{<\id{A},\id{A}>}\rho$$
---or, equivalently, $\delta_A\leq\rho$.

Following \cite{MaiettiME:quofcm} we shall
refer to such an object $\rho$ in $P(A\times A)$ as a
\dfn{$P$-equivalence relation on $A$}.
Condition \ref{tris}(i) for arrows in \ER{P} becomes redundant
and condition \ref{tris}(v) can be reduced to
$\tt_A\leq\D_{p_2}(\phi)$. 
For each object $A$ in \cat{C}, one can consider the object
$(A,\delta_A)$ in \ER{P}, and such assignment extends to a functor
$D:\cat{C}\to\ER{P}$ mapping an arrow $f:A\to B$ to the relation
$\D_{<\id{A},f>}(\tt_A)=\fp{(f\times\id{B})}(\delta_B)$. In turn, it
gives rise to a 1-arrow from $P$ to 
the indexed inf-semilattice of subobjects
$\Sub_{\ER{P}}:\ER{P}\op\longrightarrow\Cat{InfSL}$ since
$\Sub_{\ER{P}}(A,\delta_A)
\twoup{\sim}{\to}P(A)$.


\begin{exms}
The leading example of the above construction  $\ER{P}$ is the exact completion $\cat{X}\exr$~\cite{FreydP:cata,CarboniA:somfcr,CarboniA:regec}   of a regular category
$\cat{X}$, which coincides with $\ER{\S{X}}$ for the doctrine $\S{X}:\cat{X}\op\longrightarrow\Cat{InfSL}$ of
subobjects of  \cat{X}. 
 
Other examples  come from 
theories apt to formalize constructive mathematics:
the category of total setoids \`a la Bishop and functional relations
 based on
the minimalist type theory in \cite{m09}, which coincides with the construction 
\ER{G^{\mtt}} where  the doctrine $G^{\mtt}$
is defined as in \cite{MaiettiME:quofcm}, or the category of total setoids \`a la Bishop and functional relations
based on the Calculus of Constructions~\cite{tc90}, which forms a topos as mentioned  in \cite{BCP:2003} and coincides with
 \ER{G^{\CTT}} 
 where the doctrine $G^{\CTT}$ is
constructed from the Calculus of Construction
as
$G^{\mtt}$.
\end{exms}

\begin{theorem}\label{mainthm}
For every elementary existential doctrine 
$P:\cat{C}\op\longrightarrow\Cat{InfSL}$
with full comprehensions, pre-composition with the 1-arrow
$$
\xymatrix@C=4em@R=1em{
{\cat{C}\op}\ar[rd]^(.4){P}_(.4){}="P"\ar[dd]_{D\op}&\\
           & {\Cat{InfSL}}\\
{\ER{P}\op}\ar[ru]_(.4){\Sub_{\ER{P}}}^(.4){}="R"
&\ar"P";"R"_{\id{P}}^{\kern-.4ex\cdot}}
$$
in \CM induces an essential equivalence of categories 
$$
-\circ(D,\id{P}):\CM(\Sub_{\ER{P}},\S{X})\equiv\CM(P,\S{X})
$$
for every \cat{X} in \XC.
\end{theorem}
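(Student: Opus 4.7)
The plan is to recognise the statement as the universal property of a composite of three left biadjoints already recalled in the previous section, and then to identify that composite with the assignment $P\mapsto\ER{P}$ and its unit at $P$ with the 1-arrow $(D,\id{P})$.

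First I would invoke the biequivalence $\CM\simeq\FS$ of Hughes, Kelly's left biadjoint $\FS\to\RE$, and the Freyd--Scedrov left biadjoint $(\blank)\exr\colon\RE\to\XC$. Since a composite of left biadjoints is again a left biadjoint, their composite 2-functor $\CM\to\XC$ is left biadjoint to the forgetful 2-functor $\cat{X}\mapsto\S{X}$, and the 2-dimensional universal property of that biadjunction at any $\cat{X}\in\XC$ is exactly the claimed essential equivalence of hom-categories between $\CM(\Sub_{\ER{P}},\S{X})$ and $\CM(P,\S{X})$, with precomposition by the unit supplying the comparison functor.

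Second I would identify this composite 2-functor evaluated at $P$ with the category $\ER{P}$ described just before the theorem. Unwinding Hughes's biequivalence, $P$ (which already has full comprehensions) corresponds to a base category equivalent to $\cat{C}$ equipped with the proper stable factorization system coming from the existential structure; Kelly's regular completion adjoins effective quotients for kernel-pair-style relations, which in the $P$-language are exactly the diagonals $\delta_A$; Freyd--Scedrov's exact completion then adjoins quotients for the remaining $P$-equivalence relations $\rho$. Matching the arrows produced by these two constructions against conditions \ref{tris}(ii)--(v) in Definition~\ref{tris} (with \ref{tris}(i) being subsumed by $\delta_A\leq\rho$ and \ref{tris}(v) reducing to $\tt_A\leq\D_{p_2}(\phi)$) shows that the composite returns $\ER{P}$ on the nose, and that its subobject fibration is $\Sub_{\ER{P}}$.

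Third, under this identification the unit of the composite biadjunction at $P$ is forced to send an object $A$ of $\cat{C}$ to $(A,\delta_A)$ and to act on fibres by the iso $\Sub_{\ER{P}}(A,\delta_A)\cong P(A)$ recalled just before the statement; hence it coincides with $(D,\id{P})$. The theorem then follows by unfolding the 2-categorical hom-equivalence of the biadjunction.

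\textbf{Main obstacle.} The delicate step is the middle one: one has to trace Kelly's cartesian-bicategory-of-$\mathcal{M}$-relations construction and Freyd--Scedrov's exact completion through the concrete doctrine and verify that the arrows obtained are precisely the entire, strict, single-valued $P$-relations of Definition~\ref{tris}, and that the subobjects in $\ER{P}$ of $(A,\delta_A)$ recover the fibre $P(A)$ canonically through $D$. Once this bookkeeping is done, the rest is 2-categorical formalism about composing biadjunctions.
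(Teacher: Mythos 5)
Your proposal matches the paper's argument: the paper gives no separate proof of this theorem precisely because the preceding paragraphs already carry it out — composing the Hughes biequivalence $\CM\simeq\FS$, Kelly's left biadjoint $\FS\to\RE$, and the Freyd--Scedrov exact completion $(\blank)\exr\colon\RE\to\XC$, identifying the resulting left biadjoint's value at $P$ with $\ER{P}$ and its unit with $(D,\id{P})$ via $\Sub_{\ER{P}}(A,\delta_A)\cong P(A)$. Your flagged ``main obstacle'' (tracing the relations through Kelly's and Freyd--Scedrov's constructions to match Definition~\ref{tris}) is exactly the bookkeeping the paper also leaves implicit, so the approaches coincide.
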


\begin{cor}\label{maincor}
The action of the left biadjoint to the 2-functor
$\XC\longrightarrow\EE$ that takes an exact category to the elementary
existential doctrine of its subobjects is given by \TP{P} on each
elementary existential doctrine $P$.
\end{cor}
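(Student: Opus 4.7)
The plan is to obtain the required left biadjoint as the composite of the two already constructed and then to identify its value at $P$ with $\TP{P}$. Theorem \ref{cthn} produces a left biadjoint $\EE\to\CM$ taking $P$ to $\P{P}$, and Theorem \ref{mainthm} produces a left biadjoint $\CM\to\XC$ taking a doctrine $Q$ with full comprehensions to the exact category $\ER{Q}$. Since left biadjoints compose, the composite is a left biadjoint $\EE\to\XC$ whose action at $P$ is $\ER{\P{P}}$, and so the corollary reduces to exhibiting a natural equivalence of exact categories $\ER{\P{P}}\simeq\TP{P}$.

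The heart of the identification is the following observation. An object of $\ER{\P{P}}$ is a pair $((A,\alpha),\tilde{\rho})$ in which $(A,\alpha)\in\Gr(P)$ and $\tilde{\rho}\in \P{P}((A,\alpha)\times(A,\alpha))$ is a $\P{P}$-equivalence relation; unfolding $\P{P}$, this amounts to $\tilde{\rho}\in P(A\times A)$ symmetric, transitive, bounded above by $\fp{p_1}(\alpha)\Land \fp{p_2}(\alpha)$ (the top of the fibre), and reflexive in the sense $\alpha\leq \fp{<\id{A},\id{A}>}(\tilde{\rho})$. The bound forces $\fp{<\id{A},\id{A}>}(\tilde{\rho})\leq\alpha$, so $\alpha=\fp{<\id{A},\id{A}>}(\tilde{\rho})$ is uniquely determined by $\tilde{\rho}$. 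Conversely, any $P$-PER $\rho$---i.e., any object of $\TP{P}$---satisfies $\rho\leq \fp{<p_1,p_1>}(\rho)\Land \fp{<p_2,p_2>}(\rho)$ by symmetry and transitivity, and hence determines an object $((A,\fp{<\id{A},\id{A}>}(\rho)),\rho)$ of $\ER{\P{P}}$, yielding an object-level bijection.

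On arrows the same unfolding works: conditions (ii)--(v) of Definition \ref{tris} transfer verbatim, while condition (i) is exactly the fibre bound on $\tilde{\phi}$ imposed by membership in $\P{P}((A,\alpha)\times(B,\beta))$; composition and identities agree because finite meets and existential adjoints in fibres of $\P{P}$ are computed as in $P$. The point I expect to require the most care is not this translation but the compatibility of the equivalence $\ER{\P{P}}\simeq\TP{P}$ with the universal 1-arrows of the two theorems: the composite unit $P\to\Sub_{\ER{\P{P}}}$ must agree, under the equivalence, with the canonical 1-arrow $P\to\Sub_{\TP{P}}$ sending $A$ to $(A,\delta_A)$. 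Once this is checked, uniqueness of left biadjoints up to biequivalence delivers Corollary \ref{maincor}.
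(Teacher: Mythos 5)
Your proof is correct and takes essentially the same route the paper intends: the corollary is obtained by composing the left biadjoints of Theorems \ref{cthn} and \ref{mainthm} and then identifying $\ER{\P{P}}$ with \TP{P} via the observation that the fibre bound forces $\alpha=\fp{<\id{A},\id{A}>}(\tilde{\rho})$, so that objects of $\ER{\P{P}}$ are exactly the partial equivalence relations of \TP{P}. The details you supply (the object bijection, the transfer of conditions (i)--(v), and the compatibility with the universal 1-arrows) are precisely what the paper leaves implicit.
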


\begin{prop}\label{fulc}
If the elementary existential doctrine 
$P:\cat{C}\op\longrightarrow\Cat{InfSL}$
has full comprehensions, then the inclusion of \ER{P} into \TP{P} is
an equivalence of categories.
\end{prop}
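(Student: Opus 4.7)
The plan is to show that the inclusion is essentially surjective, since it is already full by construction of $\ER{P}$ as a full subcategory of $\TP{P}$. So given an arbitrary object $\ple{A,\rho}$ of $\TP{P}$, I need to construct an object $\ple{B,\sigma}\in \ER{P}$ (one with $\delta_B \leq \sigma$) together with an isomorphism $\ple{B,\sigma}\cong\ple{A,\rho}$ in $\TP{P}$.

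The construction is to carve out the ``reflexivity extent'' of $\rho$ using comprehension. I would set $\epsilon \colon= \fp{<\id{A},\id{A}>}(\rho)$ in $P(A)$---the diagonal restriction of $\rho$---and let $\cmp{\epsilon}\colon B \to A$ be its full comprehension, which exists by hypothesis. Then define $\sigma \colon= \fp{\cmp{\epsilon} \times \cmp{\epsilon}}(\rho)$ in $P(B \times B)$. Symmetry and transitivity of $\sigma$ pull back directly from those of $\rho$. The defining equation $\fp{\cmp{\epsilon}}(\epsilon) = \tt_B$ rewrites as $\fp{<\id{B},\id{B}>}(\sigma) = \tt_B$, and by the adjunction defining $\delta_B$ this is equivalent to $\delta_B \leq \sigma$. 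Hence $\ple{B,\sigma}$ lies in $\ER{P}$.

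For the isomorphism, my candidate arrows are
\[
\phi \colon= \fp{\cmp{\epsilon} \times \id{A}}(\rho) \in P(B \times A)
\quad\text{and}\quad
\psi \colon= \fp{\id{A} \times \cmp{\epsilon}}(\rho) \in P(A \times B);
\]
by symmetry of $\rho$ these are mutual opposites in the allegorical sense of the remark after \ref{tris}. The single-valuedness conditions (i)--(iv) for $\phi$ follow, after pulling back along $\cmp{\epsilon}\times\id{A}$, from symmetry and transitivity of $\rho$, while the totality condition (v) reduces, via Beck--Chevalley applied to the pullback of $\cmp{\epsilon}$ against itself, to the PER consequence $\epsilon \leq \D_{p_1}(\rho)$, witnessed by the diagonal. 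The argument for $\psi$ is symmetric.

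The hard part will be verifying the two compositional identities $\psi \circ \phi = \sigma$ in $P(B \times B)$ and $\phi \circ \psi = \rho$ in $P(A \times A)$, which ensure that $\phi$ and $\psi$ are mutually inverse in $\TP{P}$. The ``$\leq$'' halves of both are immediate from transitivity of $\rho$; the reverse inequalities require witnessing an intermediate element of $B$ from a $\rho$-related pair in $A$, and this is exactly the content of the universal property of $\cmp{\epsilon}$ applied to pairs in the reflexivity extent $\epsilon$. Frobenius reciprocity then packages the existential into the required form, and Beck--Chevalley discharges the substitutions; with these identities in hand, $\phi$ and $\psi$ give the desired isomorphism and the inclusion is an equivalence.
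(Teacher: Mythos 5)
Your proof is correct in outline but takes a genuinely different route from the paper's. The paper proves \ref{fulc} in one line by representability: since full comprehensions give $\alpha=\D_{\cmp{\alpha}}\tt$ for every $\alpha$ in $P(A)$, any 1-arrow of existential doctrines out of $P$ into a subobject doctrine automatically preserves comprehensions, so $\EE(P,\S{X})\equiv\CM(P,\S{X})$ for every exact \cat{X}; as \TP{P} birepresents the first hom-category (Corollary \ref{maincor}) and \ER{P} the second (Theorem \ref{mainthm}), the inclusion must be an equivalence. You instead argue directly that the (full, faithful by construction) inclusion is essentially surjective, splitting a partial equivalence relation $\ple{A,\rho}$ along the comprehension $\cmp{\epsilon}:B\to A$ of its extent $\epsilon=\fp{<\id{A},\id{A}>}(\rho)$ and exhibiting the graph relations $\fp{\cmp{\epsilon}\times\id{A}}(\rho)$ and $\fp{\id{A}\times\cmp{\epsilon}}(\rho)$ as a mutually inverse pair. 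This is the standard ``maps in the splitting of symmetric idempotents'' computation made explicit, and it buys a concrete description of the equivalence at the price of a fair amount of calculation that the paper's abstract argument avoids; conversely the paper's argument yields the compatibility with the universal properties for free.

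One point in your sketch deserves repair. The totality of $\phi=\fp{\cmp{\epsilon}\times\id{A}}(\rho)$ and that of $\psi=\fp{\id{A}\times\cmp{\epsilon}}(\rho)$ are \emph{not} symmetric. For $\phi$ the relevant square (the projection $B\times A\to B$ against $\cmp{\epsilon}\times\id{A}$ over $\cmp{\epsilon}$) really is a pullback of a projection, so Beck--Chevalley reduces totality to $\epsilon\leq\D_{p_1}(\rho)$ as you say. For $\psi$ the analogous square over $\id{A}$ is not a pullback, and what is actually needed is $\epsilon\leq\D_{\cmp{\epsilon}}(\tt_B)$, i.e.\ the internal surjectivity of $\cmp{\epsilon}$ onto its extent; the same inequality is what drives the reverse inclusions in $\phi\circ\psi=\rho$ and $\psi\circ\phi=\sigma$. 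This does not follow from the factorization universal property of comprehension alone (which only speaks of external arrows $Z\to A$): it is exactly the identity $\alpha=\D_{\cmp{\alpha}}\tt$ that the paper extracts from \emph{fullness}. Since fullness is in your hypotheses the argument goes through, but you should cite it there explicitly rather than appealing to ``the universal property of $\cmp{\epsilon}$'' and to symmetry.
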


\begin{proof}
It is sufficient to note that, since $P$ has full comprehensions, for
any $\alpha$ in $P(A)$, one has $\alpha=\D_{\cmp{\alpha}}\tt$. Hence
$$\EE(P,\S{X})\equiv\CM(P,\S{X})$$
for any regular category \cat{X}.\end{proof}

\begin{remark}
The statement in \ref{fulc} holds also when the elementary
existential doctrine $P$ has just weak full comprehension.
We suspect that this is related to the
analysis carried out by Jonas Frey on pre-equipments of triposes in
\cite{FreyJ:2catatt}.
\end{remark}

\section{Comparing quotient completions}

In \cite{MaiettiME:eleqc}, the authors considered a completion for
quotients of an elementary doctrine
$P:\cat{C}\op\longrightarrow\Cat{InfSL}$ which compares with the one 
presented in the previous section when $P$ is also existential.

Recall from \loccit that the elementary quotient completion
$\cat{Q}_P$ of $P$ consists of 
\begin{description}
\item[objects] which are pairs $(A,\rho)$ such that $\rho$
is a $P$-equivalence relation on $A$,
\item[an arrow {$\ec{f}:(A,\rho)\to(B,\sigma)$}] is an equivalence
class of arrows $f:A\to B$ in \cat{C} such that 
$\rho\leq P_{f\times f}(\sigma)$ in $P(A\times A)$ with
respect to the relation determined by the condition that 
$\rho\leq P_{f\times g}(\sigma)$
\end{description}
Composition is given by that of \cat{C} on representatives, and
identities are represented by identities of \cat{C}.

The indexed partial inf-semilattice
$\Q{P}:\cat{Q}_P\op\longrightarrow\Cat{InfSL}$
on $\cat{Q}_P$ is defined
on an object $(A,\rho)$  as
$$
\Q{P}(A,\rho)\colon=\des{\rho}
$$
where $\des{\rho}$ is the sub-order of
$P(A)$ on those $\alpha$ such that
$P_{\pr_1}(\alpha)\Land\rho\leq P_{\pr_2}(\alpha)$,
where $\pr_1,\pr_2:A\times A\to A$ are the projections.

By Theorem 6.1 in \cite{MaiettiME:quofcm}, when $P$ is existential
with (weak) full comprehensions,
also \Q{P} is existential. Since clearly $\Sub_{\ER{P}}$ has
quotients, there is a canonical arrow
$$
\xymatrix@C=4em@R=1em{
{\cat{Q}_P\op}\ar[rd]^(.4){\Q{P}}_(.4){}="P"\ar[dd]_{L\op}&\\
           & {\Cat{InfSL}}\\
{\ER{P}\op}\ar[ru]_(.4){\Sub_{\TP{P}}}^(.4){}="R"
&\ar"P";"R"_{\ell}^{\kern-.4ex\cdot}}
$$
of elementary existential doctrines which preserves quotients.

It is easy to see that the action of $L$ on objects is the identity
and that the components of $\ell$ are identity homomorphisms. And for
an arrow $\ec{f}:(A,\rho)\to(B,\sigma)$ in $\cat{Q}_P$
$$L\ec{f}
=\D_{\pr_2}(\D_{(\pr_1,f\circ\pr_2)}(\rho)\Land
 P_{(\pr_2,\pr_3)}(\sigma))
=\D_{\pr'_2}(P_{(\pr'_1,\pr'_2)}(\rho)\Land
 P_{(f\circ\pr'_2,\pr'_3)}(\sigma))$$
where $\pr$ denotes a projection from $A\times A\times B$ and
$\pr'$ denotes a projection from $A\times B\times B$.
Note that the construction of $L$ can be performed for any elementary
existential doctrine $P$ and that clearly $L$ is faithful.

\begin{exm}
An interesting example of the comparison above appears in
\cite{FreyJ:2catatt} applied to 
the doctrine $\P{P}:\Gr(P)\op\longrightarrow\Cat{InfSL}$ for 
$P:\cat{C}\op\longrightarrow\Cat{InfSL}$ a tripos and it is used to
analyze the tripos-to-topos construction in a refined 2-categorical
setup of pre-equipments.
\end{exm}


\begin{theorem}\label{axc}
Suppose that $P:\cat{C}\op\longrightarrow\Cat{InfSL}$
is an elementary existential doctrine with weak full comprehensions. 
Suppose moreover that, for every object $A$ and $B$ and for every
$\alpha$ in $P(A\times B)$ such that $\tt_A\leq\D_{\pr}(\alpha)$ where
$\pr:A\times B\to A$ is the first projection, there is an arrow
$w:A\to B$ in \cat{C} such that $\tt_A\leq
P_{(\id{A},w)}(\alpha)$. Then the functor $L:\cat{Q}_P\to\ER{P}$ is an
equivalence.
\end{theorem}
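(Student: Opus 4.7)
The plan is to verify that $L:\cat{Q}_P\to\ER{P}$ is essentially surjective, faithful, and full. Essential surjectivity is immediate because $L$ is the identity on objects, the underlying data $(A,\rho)$ with $\rho$ a $P$-equivalence relation being shared by both categories. Faithfulness is already recorded in the text. Thus the entire argument reduces to fullness, and this is where the choice-like hypothesis enters.

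Let $\phi:\ple{A,\rho}\to\ple{B,\sigma}$ be an arrow in $\ER{P}$. Since condition \ref{tris}(v) simplifies in $\ER{P}$ to $\tt_A\leq\D_{p_2}(\phi)$ with $p_2:A\times B\to B$, the hypothesis produces an arrow $w:A\to B$ in \cat{C} with $P_{(\id{A},w)}(\phi)=\tt_A$. One should think of $w$ as a section of the functional relation $\phi$. It remains to show (a) $\rho\leq P_{w\times w}(\sigma)$, so that $\ec{w}$ is a morphism in $\cat{Q}_P$, and (b) $L\ec{w}=\phi$ in $P(A\times B)$.

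For (a), pulling back condition \ref{tris}(ii) along the map $(s_1,s_2,w\circ s_2):A\times A\to A\times A\times B$, where $s_1,s_2$ are the projections from $A\times A$, turns the two hypotheses into $\rho$ and into $P_{s_2}(P_{(\id{A},w)}(\phi))=\tt_{A\times A}$, and the conclusion into $P_{\id{A}\times w}(\phi)$, yielding $\rho\leq P_{\id{A}\times w}(\phi)$. Pulling back condition \ref{tris}(iv) along $(s_1,w\circ s_1,w\circ s_2):A\times A\to A\times B\times B$ collapses its first conjunct to $\tt_{A\times A}$ by the same discharge, so the inequality becomes $P_{\id{A}\times w}(\phi)\leq P_{w\times w}(\sigma)$; composing with the previous one gives (a). For (b), analogous discharges applied with the map $(s_1,w\circ s_1,s_2):A\times B\to A\times B\times B$ to conditions \ref{tris}(iv) and \ref{tris}(iii) yield respectively $\phi\leq P_{w\times\id{B}}(\sigma)$ and $P_{w\times\id{B}}(\sigma)\leq\phi$, so $\phi=P_{w\times\id{B}}(\sigma)$. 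Unfolding the explicit formula for $L\ec{w}$ recorded in the text---in allegorical form, the composite of $\rho$ with the graph of $w$ with $\sigma$---one checks $L\ec{w}\leq P_{w\times\id{B}}(\sigma)$ using (a) together with transitivity of $\sigma$, and the reverse using the reflexivity $\delta_A\leq\rho$.

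The main obstacle is combinatorial rather than conceptual: the only creative step is extracting $w$ from the choice hypothesis and observing that the functionality clauses \ref{tris}(iii)--(iv) then force $\phi$ to equal the reindexing of $\sigma$ along the graph of $w$; everything else is bookkeeping with projections and Frobenius--Beck--Chevalley manipulations within the doctrine. The argument makes transparent why the stated hypothesis is the right one: it asserts precisely that every $P$-functional relation in $\ER{P}$ admits a representative map in the base category \cat{C}.
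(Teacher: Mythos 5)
Your proposal is correct and follows the same route as the paper: reduce to fullness (essential surjectivity and faithfulness being already settled), extract $w$ from the choice-like hypothesis applied to the totality condition \ref{tris}(v), and verify that $\ec{w}$ is an arrow of $\cat{Q}_P$ with $L\ec{w}=\phi$. The paper compresses everything after the extraction of $w$ into ``it is then easy to see that $\phi=L\ec{f}$''; your reindexings of conditions \ref{tris}(ii)--(iv) along the graphs of $w$, giving $\rho\leq P_{w\times w}(\sigma)$ and $\phi=P_{w\times\id{B}}(\sigma)$, are exactly the omitted bookkeeping and are carried out correctly.
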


\begin{proof}
There is only to prove that $L$ is full. So, given an arrow
$\phi:\ple{A,\rho}\to\ple{B,\sigma}$ in \ER{P}, it is
$\tt_A\leq\D_{p_2}(\phi)$. By hypothesis, there is $f:A\to B$ in
\cat{C} such that $\tt_A\leq P_{(\id{A},f)}(\phi)$, or equivalently
$\D_{\id{A}\times f}(\delta_A)\leq\phi$.
It is then easy to see that $\phi=L\ec{f}$.\end{proof}

\begin{remark}
For $P:\cat{C}\op\longrightarrow\Cat{InfSL}$
an elementary existential doctrine with full comprehensions,
it is possible to prove a converse to \ref{axc} under the further
hypothesis that every reflexive $P$-relation has a smallest transitive
extension, \ie for every object $C$ in \cat{C} and 
every object $\zeta$ in $P(C\times C)$ such that
$\delta_C\leq\zeta$, there is an object
$\tr\zeta$ in $P(C\times C)$ such that 
$$\zeta\leq\tr\zeta\qquad
\fp{<p_1,p_2>}(\tr\zeta)\Land\fp{<p_2,p_3>}(\tr\zeta)
\leq\fp{<p_1,p_3>}(\tr\zeta)$$
where $p_1,p_2,p_3:C\times C\to C$ are the
projections, and $\tr\zeta$ is smallest with those three properties.

It is easy to see that $\tr\zeta$ is symmetric when $\zeta$ is such.

Given $\alpha$ in $P(A\times B)$ such that
$\tt_A\leq\D_{\pr_1}(\alpha)$, we may assume with no loss of
generality that $\tt_B\leq\D_{\pr_2}(\alpha)$ since $P$ has full
comprehensions---$\pr_1:A\times B\to A$ and $\pr_2:A\times B\to B$ are 
the two projections. The $P$-relation
$\zeta\colon=\D_{\pr'_1}(P_{(\pr'_1,\pr'_2)}(\alpha)\Land
P_{(\pr'_1,\pr'_3)}(\alpha))$ is reflexive and symmetric in 
$P(B\times B)$. Hence $\alpha:(A,\delta_A)\to(B,\tr\zeta)$ is an arrow
in \ER{P}. Since $L$ is an equivalence, there is
$\ec{w}:(A,\delta_A)\to(B,\tr\zeta)$ in $\cat{Q}_P$ such that
$L\ec{w}=\alpha$, thus $\tt_A\leq P_{(\id{A},w)}(\alpha)$.
\end{remark}

\begin{exms}
The leading example of exact completion satisfying the hypothesis of
\ref{axc}  is that of  exact completion of a category with
products and weak pullback
\cite{CarboniA:freecl,CarboniA:somfcr,CarboniA:regec}. It is 
$\ER{\Psi}$ where $\Psi:\cat{S}\op\longrightarrow\Cat{InfSL}$ is the
functor of weak subobjects.

Another relevant doctrine for \ref{axc} is $\F$ in
 \cite{MaiettiME:quofcm}
giving rise to the total
setoid model of Martin-L{\"o}f's type theory in \cite{PMTT}.

Note also that the second stage of the construction of Joyal's arithmetic
universes in \cite{MaiettiM:joyaul}, which is the category of
decidable predicates 
$\mathrm{Pred}(\cal S)$ on a Skolem theory $\cal S$,
is a regular category and coincides with the base category of the
doctrine obtained by adding full comprehension and forcing
extensionality in the sense of \cite{MaiettiME:eleqc} to the
elementary doctrine of decidable predicates on the Skolem category
$\cal S$. Since epis split in $\mathrm{Pred}(\cal S)$, this is an
example where the hypothesis of \ref{axc} holds for the doctrine of
subobjects of the regular category $\mathrm{Pred}(\cal S)$.
\end{exms}

\raggedright\let\tt\ttori
\bibliographystyle{chicago}
\bibliography{biblio,RosoliniG,procs}
\end{document}